\theoremstyle{definition}
\newtheorem{mythm}{Theorem}[section]
\newtheorem{mycor}{Corollary}[section]
\newtheorem{mylem}{Lemma}[section]
\newtheorem{myexa}{Example}[section]
\newtheorem{myrem}{Remark} [section]
\title{Minimal Number of Steps in Euclidean Algorithm and its Application to Rational Tangles}
\author{M. Syafiq Johar}
\date{}
\begin{document}
\maketitle
\vspace{4mm}
\begin{abstract}
We define the regular Euclidean algorithm and the general form which leads to the method of least absolute remainders and also the method of negative remainders.  We are going to show that if looked from the perspective of subtraction, the method of least absolute remainders and the regular method have the same number of steps which is in fact the minimal number of steps possible. This enables us to apply the theory in rational tangles to determine the most efficient way to untangle a rational tangle.
\end{abstract}

\vspace{4mm}

\section{Introduction}
The Euclidean algorithm is a method of finding the greatest common divisor (GCD) of two numbers by repeatedly doing the division algorithm on pair of numbers. The method goes way back in ancient Greek, first described by Euclid in his mathematical treatise, \emph{Elements}. We will see the original algorithm, along with some special variants in Sections \ref{section2} and \ref{section3} of this paper. 

Despite its simplicity, the Euclidean algorithm is one of the fundamental topics in any number theory lectures and many deeper results have been built up upon it, ranging from algebra to cryptography and, as we will see in this paper, knot theory.

The connection between the purely number theoretic Euclidean algorithm and the more topological knot theory is clear in the topic of rational tangles. Rational tangles are first described by John Conway. It is a special type of knot (or more precisely, a tangle) which is constructed from two strands of strings using only two moves: twisting and rotation. 

In some literature, rotation is often replaced by reflection. However, in this paper, we will consider rotation since it is more physically feasible for practical purposes. We will see in Section \ref{section6} on the construction of rational tangles and how the Euclidean algorithm is applied in the process of untangling them. This connection, along with continued fractions, has been recognised in some knot theory books (for example, in Cromwell's book on knots and links \cite{cromwell}). However, in this paper, we will take it a step further by proving a special property of the Euclidean algorithm in Section \ref{section5} which will help us optimise the untangling algorithm.

\section{The Euclidean Algorithm} \label{section2}
We begin by defining the Euclidean algorithm. If we were given two positive integers $x_0$ and $x_1$ such that $x_0 > x_1$, by the division algorithm, we can write $x_0$ as:
$$x_0=x_1q_1+x_2$$
where $q_1$, is the greatest integer smaller than or equal to $\frac{x_0}{x_1}$ and $x_2$ is the remainder of the division of $\frac{x_0}{x_1}$ such that $0 \leq x_2 <x_1$. We can also say that we obtain $x_2$ by successively subtracting $x_1$ from $x_0$ until we get a positive integer strictly less than $x_1$. Using $x_1$ and $x_2$, we repeat the division algorithm to get the quotient $q_2$ and the remainder $x_3$ and so on until we get no more remainders. This can be written as:
\begin{eqnarray} \label{euclid}
x_0&=& x_1q_1+x_2 \nonumber
\\ x_1&=& x_2q_2+x_3 \nonumber
\\  & \vdots
\\ x_{n-2}&=& x_{n-1}q_{n-1}+x_{n} \nonumber
\\ x_{n-1}&=& x_nq_n. \nonumber
\end{eqnarray}

From this algorithm, it can be shown that the GCD of $x_0$ and $x_1$ is indeed $x_n$. For the proof, see the book by Burton \cite[p.31]{burton}. In this algorithm, we are required to do $n$ divisions to get the greatest common divisor (GCD) of $x_0$ and $x_1$, hence there are a total of $n$ equations in (\ref{euclid}).

\section{The General Euclidean Algorithm and the Method of Least Absolute Remainders} \label{section3}
An alternative method to find the GCD of two numbers is the method of least absolute remainders. If we allow negative remainders, we can choose a different remainder than the ones in (\ref{euclid}). For example, we look at the first equation. Rather than choosing $x_2$ as the remainder, we can take away another $x_1$ from $x_0$ to get a negative remainder of $(x_2-x_1)$ as follows:
\begin{eqnarray*}
x_0 &=&x_1q_1+x_2
\\ &=& x_1(q_1+1)+(x_2-x_1).
\end{eqnarray*}
This way, we can have a freedom of choosing which remainder to work with in the next step (for negative remainders, we work with its absolute value in the following step). Thus, a general way of writing the Euclidean algorithm is given by:
\begin{eqnarray} \label{euclidany}
x_0&=& x_1p_1+\epsilon_2 x_2 \nonumber
\\ x_1&=& x_2p_2+\epsilon_3 x_3 \nonumber
\\  & \vdots
\\ x_{m-2}&=& x_{m-1}p_{m-1}+\epsilon_{m}x_{m} \nonumber
\\ x_{m-1}&=& x_mp_m \nonumber
\end{eqnarray}
where $x_i$ are all positive integers such that $x_i<x_{i+1}$ and $\epsilon_i=\pm1$, depending on which remainder is chosen \cite[p.156]{goodman}. If we choose all $\epsilon_i$ to be $+1$, we would get the regular Euclidean algorithm as in the previous section. Note that there are $m$ equations in (\ref{euclidany}), hence there are $m$ divisions involved in the algorithm. 

\begin{myexa} \label{example0}
If we choose $x_0=3$ and $x_1=2$, we have two possible algorithms.

\begin{minipage}{0.5\textwidth}
\begin{eqnarray*} 
3&=& 2(1)+1
\\ 2&=& 1(2)+0
\end{eqnarray*}
\end{minipage} \hspace{-10 mm}
\begin{minipage}{0.5\textwidth}
\begin{eqnarray*} 
3&=& 2(2)-1
\\ 2&=& 1(2)+0.
\end{eqnarray*}
\end{minipage}

\vspace{5mm}
\noindent If we choose $x_0=4$ and $x_1=3$, we have three possibilities:

\begin{minipage}{0.3\textwidth}
\begin{eqnarray*} 
4&=& 3(1)+1
\\ 3&=& 1(3)+0
\\
\end{eqnarray*}
\end{minipage} 
\begin{minipage}{0.3\textwidth}
\begin{eqnarray*} 
4&=& 3(2)-2
\\ 3&=& 2(1)+1
\\ 2&=&1(2)+0
\end{eqnarray*}
\end{minipage} 
\begin{minipage}{0.3\textwidth}
\begin{eqnarray*} 
4&=& 3(2)-2
\\ 3&=& 2(2)-1
\\ 2&=&1(2)+0.
\end{eqnarray*}
\end{minipage}
\end{myexa}
\vspace{3mm}
The method of least absolute remainders is such that we always choose the remainder with the smaller absolute value in each step. In other words, we always choose the remainder $x_i$ such that $2x_i \leq x_{i-1}$ for $i=2,3,\ldots,m$.  So, in Example \ref{example0} above, for $x_0=3$ and $x_1=2$, both calculations are carried out using the method of least absolute remainders and for $x_0=4$ and $x_1=3$, only the first algorithm is carried out using the method of least absolute remainders. In the latter case, the method of least absolute remainders coincides with the regular Euclidean algorithm.

Note that the third method for the case $x_0=4$ and $x_1=3$ gives negative remainders for all the equations in the algorithm. When we require all $\epsilon_1$ to be $-1$ in (\ref{euclidany}), we call this method the method of negative remainders. We will not elaborate on this method but it will be mentioned again later when we apply the idea of Euclidean algorithms to rational tangles.

\begin{myexa} \label{example1}
Let us compare the regular euclidean algorithm with the method of least absolute remainders. Consider both versions of the Euclidean algorithms  for the numbers $x_0=807$ and $x_1=673$.

\begin{minipage}{0.5\textwidth}
\begin{eqnarray*} 
807&=& 673(1)+134
\\ 673&=& 134(5)+3
\\ 134&=& 3(44)+2
\\ 3&=& 2(1)+1
\\ 2&=& 1(2)+0
\end{eqnarray*}
\end{minipage} \hspace{-10 mm}
\begin{minipage}{0.5\textwidth}
\begin{eqnarray*} 
807&=& 673(1)+134
\\ 673&=& 134(5)+3
\\ 134&=& 3(45)-1
\\ 3&=& 1(3)+0.
\\
\end{eqnarray*}
\end{minipage}
\end{myexa}
\vspace{3 mm}
The number of divisions required in the method of least absolute remainders is smaller than the regular version. This is generally true for any pair of numbers by a theorem proven by Leopold Kronecker stating that the number of divisions in the method of least absolute remainders is not more than any other Euclidean algorithm \cite[p.47-51]{uspensky}. 

Also, in 1952, A.W. Goodman and W.M. Zaring from the University of Kentucky proved that the number of divisions in the regular Euclidean algorithm and the method of least absolute remainders differ by the number of negative remainders obtained in the method of least absolute remainders \cite[p.157]{goodman}. In mathematical terms:
$$ n-m=\frac{1}{2}\sum^m_{i=2}(|\epsilon_i|-\epsilon_i).$$

From Example \ref{example1}, in the regular Euclidean algorithm, there are $5$ divisions involved. However, in the method of least absolute remainders, there are $4$ divisions involved. Thus, there is one negative remainder somewhere in the algorithm for the method of least absolute remainders, which can be seen in the example.

\section{Number of Steps for Euclidean Algorithm} \label{section4}
What if we consider the Euclidean algorithm using subtraction rather than division i.e. we consider taking away $x_1$ from $x_0$ as one step and moving on from working with the pair $x_0$ and $x_1$ to the pair $x_1$ and $x_2$ as one step. For example, consider again Example \ref{example1}. We begin with $807$. Taking away $673$ from it leaves us with $134$ which is smaller than $673$, so we can't take anymore $673$ from it lest it will become negative. We then move on from this to work with the pair $673$ and $134$. We can list out the method by rearrangement of the Euclidean algorithm in such a way:
\begin{eqnarray*} 
807- 673(1)&=& 134
\\ 673-134(5)&=& 3
\\ 134- 3(44)&=& 2
\\ 3-2(1)&=& 1
\\ 2-1(2)&=& 0.
\end{eqnarray*}

The number of steps in the calculation can be determined by figuring out how many times subtraction is done plus how many times we switch the number pairs to work with. From the example, in the calculation above, we carried out a total of $1+5+44+1+2=53$ subtractions and $4$ swaps. Therefore, we have a total of $57$ steps.

Interestingly, if we consider the method of least absolute remainders, which can be obtained via a similar rearrangement, but this time allowing negative residues, we have:
\begin{eqnarray*} 
807-673(1)&=& 134
\\ 673-134(5)&=& 3
\\ 134-3(45)&=& -1
\\ 3-1(3)&=&0.
\end{eqnarray*}

In this algorithm, there are $1+5+45+3=54$ subtractions and $3$ swaps, making it a total of $57$ steps as well. How are the two algorithms related? It can be shown that the number of steps for these two algorithms are the same. The proof of the following theorem is based on a proof by Goodman and Zaring \cite{goodman}.

\begin{mythm} \label{samenum}
The number of steps required to carry out the Euclidean algorithm using the regular method and the method of least absolute remainders are the same.
\end{mythm}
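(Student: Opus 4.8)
The plan is to recast both procedures in the subtractive language of Section~\ref{section4} and to compare their step counts by a single induction. For a pair of positive integers $a > b$, let $T(a,b)$ denote the number of steps (subtractions plus swaps) of the regular method and $U(a,b)$ that of the method of least absolute remainders. Writing the regular division $a = bq + r$ with $0 \le r < b$, the regular method does $q$ subtractions and, if $r > 0$, one swap before continuing on $(b,r)$, so $T(a,b) = q$ when $r = 0$ and $T(a,b) = q + 1 + T(b,r)$ when $r > 0$. The function $U$ satisfies the same recursion but with the least-absolute division $a = bp + \epsilon r$, where $2r \le b$ and $\epsilon \in \{+1,-1\}$: thus $U(a,b)=p$ when $r=0$ and $U(a,b) = p + 1 + U(b,r)$ when $r > 0$. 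Both selection rules are memoryless (each step depends only on the current pair), so the suffix of a full run starting from $(b,r)$ is exactly $T(b,r)$ or $U(b,r)$. I would then prove $T(x_0,x_1) = U(x_0,x_1)$ by strong induction on the larger entry $a$, which is well-founded since every recursive call strictly decreases it and terminates at a division with zero remainder.

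The reason a naive term-by-term comparison fails is that the two runs diverge exactly at a step where the least-absolute method picks a negative remainder, so the case split is organized around the regular remainder $r$ of $a$ modulo $b$. In the first case, $r \le b/2$, the least-absolute remainder equals $r$ (the alternative being $b - r \ge b/2 \ge r$), so the two methods take the identical first step with $p = q$ and both reduce to the same pair $(b,r)$. Here the recursions give $T(a,b) = q + 1 + T(b,r)$ and $U(a,b) = q + 1 + U(b,r)$ (or both equal $q$ if $r = 0$), and the induction hypothesis applied to $(b,r)$, whose larger entry $b < a$, closes the case immediately.

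The main obstacle is the second case, $r > b/2$, where the least-absolute method overshoots: it uses $p = q + 1$, $\epsilon = -1$, absolute remainder $r' = b - r < b/2$, and proceeds on $(b,r')$, whereas the regular method proceeds on $(b,r)$. Since $b/2 < r < b$, the regular method's next division has quotient $1$ and remainder $r'$, so $T(b,r) = 1 + 1 + T(r,r')$ and hence $T(a,b) = q + 3 + T(r,r')$, while $U(a,b) = (q+1) + 1 + U(b,r') = q + 2 + U(b,r')$. To reconcile these I would isolate a shift lemma: for integers $c > d \ge 1$, one has $U(c+d,\,d) = U(c,d) + 1$, because $c$ and $c+d$ share the same least-absolute residue modulo $d$, so the first step for $c+d$ has the same $\epsilon$ and the same $r$ but quotient larger by one, with an identical continuation on $(d,r)$. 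Applying this with $c = r$ and $d = r'$ (valid since $b = r + r'$ with $r > r'$) yields $U(b,r') = U(r,r') + 1$; combined with the induction hypothesis $U(r,r') = T(r,r')$ (larger entry $r < a$), this gives $U(a,b) = q + 2 + (T(r,r') + 1) = q + 3 + T(r,r') = T(a,b)$, completing the induction. Establishing and correctly applying this shift lemma is where the real work lies, since it is what converts the seemingly incomparable subproblems $(b,r)$ and $(b,r')$ into a matched pair.

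As a consistency check I would relate the result to the cited identity of Goodman and Zaring \cite{goodman}. Writing the total step counts as $\sum_{i=1}^n q_i + (n-1)$ and $\sum_{i=1}^m p_i + (m-1)$, the desired equality is equivalent to $\sum_{i=1}^m p_i - \sum_{i=1}^n q_i = n - m$; since $n - m$ equals the number of negative remainders, this says precisely that the least-absolute method performs, in total, exactly one extra subtraction per negative remainder. The overshoot computation in the second case above is exactly what accounts for these extra subtractions, so the induction and the cited identity corroborate one another.
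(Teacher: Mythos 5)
Your proof is correct, but it takes a genuinely different route from the paper's. The paper argues globally on the list of equations (\ref{euclid}): it locates the first oversized remainder $x_i>\frac{x_{i-1}}{2}$, observes that this forces $q_i=1$, and rewrites the three equations (\ref{euclidi}) into two equations with quotients $q_{i-1}+1$ and $q_{i+1}+1$; one swap and the single subtraction $q_i=1$ disappear while two extra subtractions appear, so the total count $\sum_j q_j$ plus (number of equations minus one) is unchanged, and induction on the number of oversized remainders transforms the regular algorithm into the least-absolute-remainder algorithm at constant cost. You instead compare the two procedures top-down, by strong induction on the larger entry, splitting on whether the regular remainder $r$ exceeds $b/2$, and you reconcile the divergent subproblems $(b,r)$ and $(b,r')$ via the shift lemma $U(c+d,d)=U(c,d)+1$. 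The two arguments encode the same arithmetic cancellation --- your case-2 bookkeeping $q+3+T(r,r')$ versus $(q+1)+1+(U(r,r')+1)$ is exactly the paper's three-to-two rewrite viewed recursively --- but yours is self-contained and makes the well-foundedness of the induction explicit, whereas the paper's version makes the Goodman--Zaring identity visible (each rewrite deletes exactly one equation, so $n-m$ equals the number of negative remainders), which is what Corollary \ref{regularalgmin2} later exploits. Note also that your shift lemma is not the paper's Lemma \ref{lemma} (that lemma replaces $x_1$ by $x_0-x_1$ and is much harder); however, the same observation does occur in the paper, as the step $S_m(2x_1-x_2,\,x_1-x_2)=1+S_m(x_1,\,x_1-x_2)$ inside the second case of the proof of Lemma \ref{lemma}. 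One point you should make explicit: in your first case you silently resolve the tie $2r=b$ in favour of the positive remainder ($p=q$, $\epsilon=+1$). This convention is essential --- with the opposite choice the statement already fails for the pair $(3,2)$, as the paper observes in Example \ref{example0} --- and the paper itself only fixes this convention later, in the proof of Theorem \ref{coolthm}, so your proof should state it as a hypothesis on how the least-absolute method breaks ties.
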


\begin{proof}
Suppose that we have the set of $n$ equations in the regular Euclidean algorithm as in (\ref{euclid}). The number of swaps are $n-1$ and thus, the number of steps required to reduce it down to $0$ is $S=(n-1)+ \sum^n_{j=1}q_j$. If $2x_i \leq x_{i-1}$ for all $i=2,3,\ldots,n$, we are done as this regular Euclidean algorithm is identically the same as the method of least absolute remainders. 

Suppose that there exists some $i=2,3,\ldots,n-1$ such that $\frac{x_{i-1}}{2} < x_i<x_{i-1}$. Note that $x_n$ divides $x_{n-1}$ exactly, so $i \neq n$. Choose the smallest such $i$ and consider the three equations: 
\begin{eqnarray} \label{euclidi}
x_{i-2}&=& x_{i-1}q_{i-1}+x_i \nonumber
\\ x_{i-1}&=& x_iq_i+x_{i+1} 
\\ x_i&=& x_{i+1}q_{i+1}+x_{i+2}. \nonumber
\end{eqnarray}
We want to transform this set of equations to the method of least absolute remainders so we want to get rid of the remainder $x_i$ as it is bigger than $\frac{x_{i-1}}{2}$. In order to do so, we manipulate the first equation in (\ref{euclidi}) to get: 
\begin{equation} \label{manipulated} x_{i-2}= x_{i-1}(q_{i-1}+1)-(x_{i-1}-x_i).\end{equation}
Also, by our assumption, $\frac{x_{i-1}}{2} < x_i<x_{i-1} \Rightarrow 1<\frac{x_{i-1}}{x_i}<2$ and hence $q_i=1$. Thus the second equation in (\ref{euclidi}) becomes:
$$x_{i-1}= x_i+x_{i+1} \Rightarrow x_{i+1}= x_{i-1}-x_i.$$
Putting this in the third equation of (\ref{euclidi}) and equation (\ref{manipulated}), we would get:
\begin{eqnarray*} 
x_{i-2}&=& x_{i-1}(q_{i-1}+1)-x_{i+1} 
\\ x_{i-1}&=& x_{i+1}(q_{i+1}+1)+x_{i+2}. 
\end{eqnarray*}
Therefore, there are $n-1$ equations left in (\ref{euclid}) now. Thus, the number of steps required to reduce it down to $0$ after this manipulation is given by: 
\begin{align*}&((n-1)-1)+\sum^{i-2}_{j=1}q_j+(q_{i-1}+1)+(q_{i+1}+1)+\sum^n_{j=i+2}q_j
\\= \ & (n-1)+\sum^n_{\substack{j=1 \\ j\neq i}}q_j+1\ = \ (n-1)+\sum^n_{\substack{j=1 \\ j\neq i}}q_j+q_i \ = \ (n-1)+\sum^n_{j=1}q_j.
\end{align*}
This is the same as the number of steps before the manipulation. By induction, if we continue down the algorithm and get rid of all the remainders $x_i$ that are greater than $\frac{x_{i-1}}{2}$, we get a constant number of steps for getting it down to $0$.
\end{proof}

\section{Minimum Number of Steps} \label{section5}
In this section, we are going to prove that indeed, the number of steps that is obtained from the method of least absolute remainders is the minimum possible among any other general Euclidean algorithm. 

We are going to use Kronecker's proof \cite[p.47-51]{uspensky} as a loose base for our proof to show that the method of least absolute remainders requires the least number of steps among any other Euclidean algorithm. The main difference between Kronecker's proof and our proof here is the way we defined the number of steps i.e. in our case, we have taken the value of the remainders into account as we have defined in the previous section. However, the general idea of proving it via induction and splitting it into cases are similar.

We begin with two lemmas. We denote the number of steps required to find the GCD of two numbers $a$ and $b$ using the method of least absolute remainders as $S_m(a,b)$ and the number of steps for any Euclidean algorithm as $S(a,b)$.

\begin{mylem} \label{easylemma}
If $x_1$ divides $x_0$ exactly i.e. $kx_1=x_0$ for some positive integer $k$, the number of steps for finding the GCD of $x_0$ and $x_1$ are the same for any method. Precisely, $S(x_0,x_1)=S_m(x_0,x_1)=k$.
\end{mylem}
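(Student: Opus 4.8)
The plan is to exploit the fact that exact divisibility removes all freedom from the very first step, so that every method is forced to perform the identical computation, and then simply count.

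First I would record that $x_0 = kx_1$ means the division of $x_0$ by $x_1$ leaves remainder $0$. In the general algorithm (\ref{euclidany}) the opening line is $x_0 = x_1 p_1 + \epsilon_2 x_2$, where the remainder must satisfy $|\epsilon_2 x_2| < x_1$. Since $x_0 \equiv 0 \pmod{x_1}$, the only integer congruent to $x_0$ modulo $x_1$ with absolute value strictly less than $x_1$ is $0$ itself; hence $\epsilon_2 x_2 = 0$ and the quotient is forced to be $p_1 = k$. The algorithm therefore terminates after this single equation, whatever method is used.

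Next I would explicitly rule out the only apparent source of freedom, namely overshooting into a negative remainder. Taking $p_1 = k+1$ produces the remainder $x_0 - (k+1)x_1 = -x_1$, and taking $p_1 = k-1$ produces $+x_1$; both have absolute value $x_1$ and so violate the remainder bound. Thus no method --- regular, least absolute, negative, or general --- can select a quotient other than $k$, and all of them reduce to the lone equation $x_0 = x_1 k$.

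Finally I would count steps using the convention of Section \ref{section4}. A single equation means there are no swaps, while the lone quotient $k$ contributes $k$ subtractions. Every method therefore performs exactly $k$ subtractions and no swaps, giving $S(x_0,x_1) = S_m(x_0,x_1) = k$. I expect no genuine obstacle here: the statement is essentially a base case for the induction in the forthcoming minimality theorem, and the only point requiring any care is the uniqueness of the first step, which follows at once from the remainder bound $|\epsilon_2 x_2| < x_1$.
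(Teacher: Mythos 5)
Your proposal is correct. The paper itself gives no proof of this lemma --- it is stated as immediate and used as a base case for Theorem \ref{coolthm} --- and your argument supplies exactly the reasoning left implicit there: the remainder bound $|\epsilon_2 x_2| < x_1$ forces the single equation $x_0 = x_1 k$ as the only admissible first (and last) step in any method, and counting via the convention of Section \ref{section4} gives $k$ subtractions and zero swaps, hence $S(x_0,x_1)=S_m(x_0,x_1)=k$.
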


\begin{mylem} \label{lemma}
If $2x_1<x_0$, the number of steps for finding the GCD of $x_0$ and $x_1$ using the method of least absolute remainders is exactly one less than the number of steps for finding the GCD of $x_0$ and $x_0-x_1$ using the same method. In other words, $S_m(x_0,x_1) +1= S_m(x_0,x_0-x_1)$.
\end{mylem}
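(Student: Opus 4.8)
The plan is to use strong induction on the larger argument $x_0$, comparing in each case the first step of the method of least absolute remainders applied to the pair $(x_0,x_1)$ with the first step applied to $(x_0,x_0-x_1)$. Throughout write $c=x_0-x_1$; the hypothesis $2x_1<x_0$ forces $x_1<\tfrac{x_0}{2}<c<x_0$, so that in the pair $(x_0,c)$ the first quotient must be $1$ or $2$, whereas in $(x_0,x_1)$ it is at least $2$. When a remainder is exactly half of the divisor I break the tie by taking the \emph{positive} remainder; this choice is essential, since with the opposite convention the stated identity already fails by a boundary term at $x_0=3x_1$.

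First I would record an elementary ``single subtraction'' identity that is reused in every case: whenever $2x_1<x_0$,
\[ S_m(x_0,x_1)=S_m(x_0-x_1,x_1)+1. \]
This is immediate from the first step. If $x_0=x_1p+\epsilon r$ is the least-absolute-remainder division, then because subtracting an integer commutes with rounding to the nearest integer, $x_0-x_1=x_1(p-1)+\epsilon r$ is the corresponding division for $(x_0-x_1,x_1)$, with $p-1\ge 1$ and the \emph{identical} remainder $\epsilon r$. Both pairs therefore reduce to the same subproblem $(x_1,|r|)$ after one swap, and the only difference is one fewer subtraction in the first step; the exact-division case $r=0$ is handled directly by Lemma \ref{easylemma}.

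With this in hand, the main statement reduces to computing $S_m(x_0,c)$ and splitting on the first quotient. If $x_0\ge 3x_1$ (equivalently $2x_1\le c$), the first division is $x_0=c\cdot 1+x_1$ with $x_1$ already the least absolute remainder, so $S_m(x_0,c)=1+1+S_m(c,x_1)=2+S_m(c,x_1)$; combined with the single-subtraction identity $S_m(x_0,x_1)=S_m(c,x_1)+1$ this gives exactly $S_m(x_0,c)=S_m(x_0,x_1)+1$. If instead $2x_1<x_0<3x_1$, the first division is $x_0=c\cdot 2-(x_0-2x_1)$, a negative remainder that one checks is least in absolute value on this range, so $S_m(x_0,c)=2+1+S_m(c,x_0-2x_1)=3+S_m(c,x_0-2x_1)$. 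The key move is to recognise $(c,x_0-2x_1)$ as a strictly smaller instance of the lemma itself: since $c=x_0-x_1<x_0$ and $2(x_0-2x_1)<c$, the induction hypothesis applied to $(c,x_0-2x_1)$ — whose complementary pair is $\bigl(c,\,c-(x_0-2x_1)\bigr)=(c,x_1)$ — yields $S_m(c,x_1)=S_m(c,x_0-2x_1)+1$. Substituting returns $S_m(x_0,c)=2+S_m(c,x_1)$, and the single-subtraction identity closes the argument as before.

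I expect the negative-remainder case to be the main obstacle: one must both verify that $x_0-2x_1$ is genuinely the least absolute remainder for $(x_0,c)$ when $2x_1<x_0<3x_1$, and spot that the pair it produces is precisely the lemma's own hypothesis on a smaller argument, so that the induction actually closes. The base case ($x_0$ minimal, which forces $x_1=1$ and exact division) is covered by Lemma \ref{easylemma}, and well-foundedness is clear because the only recursive call strictly decreases the larger argument from $x_0$ to $c$.
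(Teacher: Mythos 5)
Your proof is correct, and although it shares the paper's overall strategy (strong induction on $x_0$ together with a case split on the size of $x_0$ relative to $x_1$, comparing the first steps of the least-absolute-remainder expansions), the organisation is genuinely different. The paper needs three cases, $2x_1<x_0\le\tfrac{5}{2}x_1$, $\tfrac{5}{2}x_1<x_0\le 3x_1$ and $3x_1<x_0$, because it tracks the expansion of the pair $(x_0,x_1)$, whose first quotient changes at $\tfrac{5}{2}x_1$; you track only the expansion of $(x_0,x_0-x_1)$, whose first quotient changes at $3x_1$, so two cases suffice. Your single-subtraction identity $S_m(x_0,x_1)=S_m(x_0-x_1,x_1)+1$ is the second structural difference: the paper re-derives essentially this fact inline in each of its cases (as the parallel step-by-step comparison in its first and third cases, and as the observation $S_m(2x_1-x_2,x_1-x_2)=1+S_m(x_1,x_1-x_2)$ in its middle case), whereas you isolate it once and reuse it. Third, the induction hypothesis lands on a different pair: the paper recurses on $(x_1,x_2)$ with $x_2=3x_1-x_0$, while you recurse on $(x_0-x_1,\,x_0-2x_1)$, whose complement pair is exactly $(x_0-x_1,\,x_1)$ --- that is, you re-invoke the lemma's own structure one level down, which is precisely what lets your case $2x_1<x_0<3x_1$ absorb both of the paper's lower cases. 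A pleasant consequence is that the paper's explicit verification of the base cases $x_0=3,4,5$ becomes unnecessary: your case $x_0\ge 3x_1$ uses no induction at all, and the recursion in the other case strictly decreases the larger argument, so the induction bottoms out automatically. Both arguments hinge on the same tie-breaking convention (choose the positive remainder when the remainder is exactly half the divisor); you state it upfront and correctly note it is forced by the boundary situation at $x_0=3x_1$, whereas the paper only makes this convention explicit later, in the proof of Theorem \ref{coolthm}.
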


\begin{proof}
We are going to prove this by strong induction. We begin with the case $x_0=3$. The only possible value for $x_1$ is $1$.

\begin{minipage}{0.5\textwidth}
\begin{eqnarray*} 
3&=&1(3)+0
\\ S_m(3,1)&=&3
\\
\end{eqnarray*}
\end{minipage} \hspace{-15 mm}
\begin{minipage}{0.5\textwidth}
\begin{eqnarray*} 
3&=& 2(1)+1
\\ 2&=&1(2)+0
\\ S_m(3,2)&=&1+1+2=4.
\end{eqnarray*}
\end{minipage}
\vspace{5 mm}

\noindent Similarly, for the case $x_0=4$, we have $x_1=1$ as the only possibility.

\begin{minipage}{0.5\textwidth}
\begin{eqnarray*} 
4&=&1(4)+0
\\ S_m(4,1)&=&4
\\
\end{eqnarray*}
\end{minipage} \hspace{-15 mm}
\begin{minipage}{0.5\textwidth}
\begin{eqnarray*} 
4&=& 3(1)+1
\\ 3&=&1(3)+0
\\ S_m(4,3)&=&1+1+3=5.
\end{eqnarray*}
\end{minipage}
\vspace{4 mm}

\noindent However, for the case $x_0=5$, we have two possibilities for $x_1$ which are $1$ and $2$.

\begin{minipage}{0.5\textwidth}
\begin{eqnarray*} 
5&=&1(5)+0
\\ S_m(5,1)&=&5
\\
\end{eqnarray*}
\end{minipage} \hspace{-15 mm}
\begin{minipage}{0.5\textwidth}
\begin{eqnarray*} 
5&=& 4(1)+1
\\ 4&=&1(4)+0
\\ S_m(5,4)&=&1+1+4=6
\end{eqnarray*}
\end{minipage}

\begin{minipage}{0.5\textwidth}
\begin{eqnarray*} 
5&=&2(2)+1
\\2&=&1(2)+0
\\ S_m(5,2)&=&2+1+2=5
\\
\end{eqnarray*}
\end{minipage} \hspace{-7.5 mm}
\begin{minipage}{0.5\textwidth}
\begin{eqnarray*} 
5&=& 3(1)+2
\\ 3&=&2(1)+1
\\2&=&1(2)+0
\\ S_m(5,3)&=&1+1+1+1+2=6.
\end{eqnarray*}
\end{minipage}
\vspace{5 mm}

\noindent So, in all the cases above, we can see that $S_m(x_0,x_1)+1= S_m(x_0,x_0-x_1)$. Now, for induction, we assume that this is true for all $2x_1<x_0<n$. We are going to prove it for the case $2x_1<x_0=n$. We break the problem into three cases:

\begin{description}
\item[First case: $2x_1<x_0\leq\frac{5}{2}x_1$] \hfill
\\ For this case, for the numbers $x_0$ and $x_1$, the method of least absolute remainders begins with $x_0=2x_1+x_2$ and continues with $x_1=x_2q_2+\epsilon_3x_3$. For the other pair $x_0$ and $x_0-x_1$, we have the equation $x_0=2(x_0-x_1)-(x_0-2x_1)=2(x_0-x_1)-x_2$. This is calculated using the method of least absolute remainders as $2x_2=2x_0-4x_1<x_0-x_1$ by the assumption that $x_0\leq 3x_1$. We continue with the next equation relating $x_0-x_1$ and $x_1$. i.e. $x_0-x_1=x_1+(x_0-2x_1)$. This also calculated using the method of least absolute remainders as $x_0\leq\frac{5}{2}x_1 \Rightarrow 2(x_0-2x_1)<x_1$. We compare the two algorithms:
\vspace{-4mm}

\begin{minipage}{0.5\textwidth}
\begin{eqnarray*} 
(1) \ x_0&=&2x_1+x_2
\\(2) \ x_1&=&x_2q_2+\epsilon_3x_3
\\
\\
\end{eqnarray*}
\end{minipage} \hspace{-15 mm}
\begin{minipage}{0.5\textwidth}
\begin{eqnarray*} 
(1) \hspace{9.7mm} x_0&=&2(x_0-x_1)-x_2
\\ (2) \ x_0-x_1&=&x_1+(x_0-2x_1)
\\ &=&x_1+x_2
\\ &=& x_2(q_2+1)+\epsilon_3x_3.
\end{eqnarray*}
\end{minipage}

\vspace{3mm}
If we continue carrying out the algorithm for both pairs using the method of least absolute remainders, the rest of the calculations are the identical because we will carry out the following steps in both calculations using $x_2$ and $x_3$ as the starting pair. Therefore, we can calculate the number of steps for the algorithm:
\begin{eqnarray*}
S_m(x_0,x_1)+1&=&(2+1+q_2+1+S_m(x_2,x_3))+1
\\&=&2+1+(q_2+1)+1+S_m(x_2,x_3)
\\ &=& S_m(x_0,x_0-x_1).
\end{eqnarray*}
\item[Second case: $\frac{5}{2}x_1<x_0\leq3x_1$] \hfill
\\
For this case, the method of least absolute remainders begins with $x_0=3x_1-x_2$. If we continue with the pair $x_1$ and $x_2$ using the method of least absolute remainders, we have: 
$$S_m(x_0,x_1)=3+1+S_m(x_1,x_2)=4+S_m(x_1,x_2).$$

 As in the previous case, for the pair $x_0$ and $x_0-x_1$, we begin with  $x_0=2(x_0-x_1)-(x_0-2x_1)=2(2x_1-x_2)-(x_1-x_2)$. We continue with the pair $2x_1-x_2$ and $x_1-x_2$ to get the equation:  $2x_1-x_2=(x_1-x_2)+x_1$. We wish to relate this with the pair $x_1$ and $x_1-x_2$ so that we can use the inductive hypothesis. 

Note that the equation relating $x_1$ and $x_1-x_2$ is $x_1=(x_1-x_2)q+r$ where $2r<x_1-x_2$. Notice that this is equivalent to $2x_1-x_2=(x_1-x_2)(q+1)+r$ by the relationship $2x_1-x_2=(x_1-x_2)+x_1$. This implies that $S_m(2x_1-x_2,x_1-x_2)=1+S_m(x_1,x_1-x_2)$. Therefore, we have the equation:
\begin{eqnarray*}
S_m(x_0,x_0-x_1)&=&2+1+S_m(x_0-x_1,x_0-2x_1)
\\&=&3+S_m(2x_1-x_2,x_1-x_2)
\\&=&4+S_m(x_1,x_1-x_2).
\end{eqnarray*}
Thus, putting the two results together and using the inductive hypothesis, we have:
\begin{eqnarray*}
S_m(x_0,x_1)+1&=&4+(1+S_m(x_1,x_2))
\\&=&4+S_m(x_1,x_1-x_2)
\\&=&S_m(x_0,x_0-x_1).
\end{eqnarray*}
\item[Third case: $3x_1<x_0$] \hfill
\\
The first step for the method of least absolute remainders for the pair $x_0$ and $x_1$ is $x_0=x_1q_1+\epsilon_2x_2$ such that $2x_2<x_1$. For the pair $x_0$ and $x_0-x_1$, we have $x_0=(x_0-x_1)+x_1$, this is done using the method of least absolute remainders as $3x_1<x_0 \Rightarrow 2x_1<x_0-x_1$. We continue with the pair $x_0-x_1$ and $x_1$. By the relationship $x_0=x_1q_1+\epsilon_2x_2$, we have $x_0-x_1=x_1(q_1-1)+\epsilon_2 x_2$. The rest of the calculations are done using the pair $x_1$ and $x_2$ so they are identical onwards. Therefore, we have:
\begin{eqnarray*}
S_m(x_0,x_1)+1&=&q_1+1+S_m(x_1,x_2)+1
\\ &=&1+1+(q_1-1)+1+S_m(x_1,x_2) 
\\&=& S_m(x_0,x_0-x_1).
\end{eqnarray*}
\end{description}
Therefore, by proving all the three cases, we are done.
\end{proof}

With the two lemmas above, we are going to show that the method of least absolute remainders gives the least number of steps among any other Euclidean algorithm methods. 

\begin{mythm} \label{coolthm}
Let $x_0$ and $x_1$ be positive integers such that $x_1<x_0$. Then, the method of least absolute remainders gives the least number of steps among any other Euclidean algorithm methods for this pair of integers. In other words, $S_m(x_0,x_1) \leq S(x_0,x_1)$.
\end{mythm}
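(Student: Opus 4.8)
The plan is to argue by strong induction on the larger integer $x_0$, inspecting only the first equation of an arbitrary general Euclidean algorithm and then deferring to the two lemmas already established. The base case is furnished by Lemma \ref{easylemma}: whenever $x_1$ divides $x_0$, every method — including the method of least absolute remainders — uses exactly $k=x_0/x_1$ steps, so $S_m(x_0,x_1)=S(x_0,x_1)$ and there is nothing to prove. Hence I may assume $x_1\nmid x_0$ and that the statement holds for every pair whose larger element is strictly smaller than $x_0$.

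The key structural observation is that, because a general Euclidean algorithm requires its remainders to satisfy $0<x_{i+1}<x_i$, the first equation admits exactly two possibilities: the positive choice $x_0=x_1\lfloor x_0/x_1\rfloor+r$ with $r=x_0-x_1\lfloor x_0/x_1\rfloor$, or the negative choice $x_0=x_1(\lfloor x_0/x_1\rfloor+1)-s$ with $s=x_1-r$. Writing $q=\lfloor x_0/x_1\rfloor$, in subtraction-step terms the positive choice costs $q+1$ (quotient subtractions plus one swap) and continues with the pair $(x_1,r)$, whereas the negative choice costs $q+2$ and continues with $(x_1,s)$. Crucially, the two continuation pairs are $(x_1,r)$ and $(x_1,x_1-r)$, which is exactly the configuration compared in Lemma \ref{lemma}. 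Applying the inductive hypothesis to the continuation of an arbitrary algorithm $A$ for $(x_0,x_1)$ then gives
\[ S(A)\ \ge\ q+1+S_m(x_1,r)\quad\text{or}\quad S(A)\ \ge\ q+2+S_m(x_1,s), \]
according to whether the first step of $A$ is positive or negative.

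It then remains to check that the method of least absolute remainders attains the smaller of these two lower bounds. Suppose first that the positive remainder is the least absolute one, i.e. $2r<x_1$; then $S_m(x_0,x_1)=q+1+S_m(x_1,r)$, and Lemma \ref{lemma} applied to $(x_1,r)$ yields $S_m(x_1,r)+1=S_m(x_1,x_1-r)=S_m(x_1,s)$. Hence $S_m(x_0,x_1)=q+S_m(x_1,s)$, which is at most both displayed lower bounds, so $S_m(x_0,x_1)\le S(A)$ regardless of the first step of $A$. The symmetric computation, invoking Lemma \ref{lemma} on $(x_1,s)$ when $2s<x_1$, settles the case where the negative remainder is the least absolute one. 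The borderline case $2r=x_1$ (so $r=s=x_1/2$) is immediate, since both first steps continue with the same pair $(x_1,x_1/2)$ and the positive step is the cheaper one.

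I expect the main obstacle to be conceptual rather than computational: recognising that at each stage a general Euclidean algorithm has only the floor/ceiling dichotomy, that the two resulting remainders sum to $x_1$, and therefore that Lemma \ref{lemma} is precisely the instrument for comparing the two branches. The supporting bookkeeping fact is that the ceiling quotient exceeds the floor quotient by exactly $1$, which is what makes the ``$+1$'' in Lemma \ref{lemma} line up with the extra subtraction incurred by the negative choice. Some care must also be taken with the degenerate branch $2r=x_1$ and with confirming that the induction is well-founded, which it is, since each continuation pair has larger element $x_1<x_0$.
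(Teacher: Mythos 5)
Your proposal is correct and follows essentially the same route as the paper: strong induction on $x_0$, reduction to the first equation where the only two admissible remainders are $r$ and $x_1-r$ (with the ceiling quotient exceeding the floor quotient by $1$), the use of Lemma \ref{lemma} to compare the two branches, and the positive-remainder convention to resolve the tie $2r=x_1$. The only differences are presentational — you dispatch the base case via Lemma \ref{easylemma} and phrase the comparison as two lower bounds that $S_m$ attains, while the paper checks small cases explicitly and splits on whether the competing algorithm's remainder equals the least absolute one — but the substance is identical.
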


\begin{proof}
Similar to the lemma above, we are going to prove this by induction. The cases for $x_0=3$ and $x_1=2$ have been done in Example \ref{example0}. Note that both methods are considered as the method of least absolute remainders but the first one gives $4$ steps and the second one gives $5$ steps. Both of them are method of least absolute remainders but why the number of steps are different? Here we are going to rule out an ambiguity. 

For the case where $x_0=a(2k+1)$ and $x_1=2a$ for some positive integers $a$ and $k$, we could either choose $a$ or $-a$ as the remainder (i.e. the absolute value of the remainder is exactly half of $x_1$) as both have the same magnitude. In our case, when this happens, we define the method of least absolute remainders by choosing the positive remainder so as to reduce the number of steps in the calculation (if we choose the negative remainder, we have to do one extra step of taking away another $x_1$ from $x_0$). Also, if this case were to happen, the following step will be the last step in the algorithm as in the following step, as implied from Lemma \ref{easylemma}.

Therefore, by this rule, for the pair $x_0=3$ and $x_1=2$ in Example \ref{example0}, the first algorithm is the method of least absolute remainders (and the second one is not) and thus the number of step is minimised using this algorithm.

Next we look at the case $x_0=4$, we have two choices for $x_1$ which are $2$ and $3$. However, for $x_1=2$, there is nothing to be done as $2$ divides $4$, so any algorithm will give the same number of steps by Lemma \ref{easylemma}. We only consider $x_1=3$. This, again, has been done in Example \ref{example0}, and clearly, $S_m(4,3)=5$ which is less than the other two cases listed (7 and 8 respectively).

Now we look at $x_0=5$. There are three choices possible for $x_1$ namely $2,3$ and $4$. We look at all three of them and work out all possible Euclidean algorithms for each pair. The method of least absolute remainders will be the first one in each list.

\hspace{3mm}
\begin{minipage}{0.5\textwidth} 
\begin{eqnarray*} 
5&=& 2(2)+1
\\ 2&=& 1(2)+0
\end{eqnarray*}
\end{minipage} \hspace{-22mm}
\begin{minipage}{0.5\textwidth}
\begin{eqnarray*} 
5&=& 2(3)-1
\\ 2&=& 1(2)+0
\end{eqnarray*}
\end{minipage}

\begin{minipage}{0.3\textwidth}
\begin{eqnarray*} 
5&=& 3(2)-1
\\ 3&=& 1(3)+0
\\ 
\end{eqnarray*}
\end{minipage} 
\begin{minipage}{0.3\textwidth}
\begin{eqnarray*} 
5&=& 3(1)+2
\\ 3&=& 2(1)+1
\\ 2&=&1(2)+0
\end{eqnarray*}
\end{minipage} 
\begin{minipage}{0.3\textwidth}
\begin{eqnarray*} 
5&=& 3(1)+2
\\ 3&=& 2(2)-1
\\ 2&=& 1(2)+0
\end{eqnarray*}
\end{minipage}

\hspace{-8mm}
\begin{minipage}{0.25\textwidth}
\begin{eqnarray*} 
5&=& 4(1)+1
\\ 4&=& 1(4)+0
\\
\\
\end{eqnarray*}
\end{minipage} 
\begin{minipage}{0.25\textwidth}
\begin{eqnarray*} 
5&=& 4(2)-3
\\ 4&=& 3(1)+1
\\ 3&=&1(3)+0
\\
\end{eqnarray*}
\end{minipage} 
\begin{minipage}{0.25\textwidth}
\begin{eqnarray*} 
5&=& 4(2)-3
\\ 4&=& 3(2)-2
\\ 3&=&2(1)+1
\\ 2&=& 1(2)+0
\end{eqnarray*}
\end{minipage}
\begin{minipage}{0.25\textwidth}
\begin{eqnarray*} 
5&=& 4(2)-3
\\ 4&=& 3(2)-2
\\ 3&=&2(2)-1
\\ 2&=& 1(2)+0
\end{eqnarray*}
\end{minipage}

\vspace{5mm}
\noindent We can calculate the number of steps required for all of the three cases and see that the number of steps using the method of least absolute remainders is the least among the other algorithms for the same pair. Note that for the case $x_0=5$ and $x_1=3$, the method of least absolute remainders (the first algorithm) and the regular Euclidean algorithm (the second algorithm) have the same number of steps even though their lengths are different. This is true in any case, according to Theorem \ref{samenum} earlier. 

Now, for induction, we assume that for $x_0<n$ we have $S_m(x_0,x_1)\leq S(x_0,x_1)$. We are going to prove for the case $x_0=n$. Let $x_1<x_0$ which does not divide $x_0$ exactly. Consider the equation $x_0=x_1q_1+\epsilon_2 x_2$ such that $2x_2 \leq x_1$ (obtained using the method of least absolute remainders) and another equation $x_0=x_1q_1'+\epsilon_2'x_2'$ be the first step of any general Euclidean algorithm. We split the problem into two cases:
\begin{description}
\item[First case: $x_2=x_2'$] \hfill
\\ If $\epsilon_2=\epsilon_2'$, then the two equations are the identically the same, thus $q_1=q_1'$, and hence, by the inductive step, since $x_1<n$, we have: 
$$S_m(x_0,x_1)=q_1+1+S_m(x_1,x_2)\leq q_1'+1+S(x_1,x_2)=S(x_0,x_1).$$

If $\epsilon_2=-\epsilon_2'$, by adding the two equations, we would get $2x_0=x_1(q_1+q_1')$. Also, it is clear that $q_1'=q_1+\epsilon_2$. Putting this in either equation will yield $2x_2=x_1$, which forces this situation to be the ambiguous case. This implies that $\epsilon_2=+1$ and $q_1'=q_1+1$. Thus $S_m(x_1,x_2)=S(x_1,x_2)=2$ by Lemma \ref{easylemma} and:
$$S_m(x_0,x_1)=q_1+1+S_m(x_1,x_2)< q_1'+1+S(x_1,x_2)=S(x_0,x_1).$$
\item[Second case: $x_2<x_2'$] \hfill
\\ The method of least absolute remainders begins with $x_0=x_1q_1+\epsilon_2 x_2$ such that $2x_2 \leq x_1$ and any general Euclidean algorithm begins with $x_0=x_1q_1'+\epsilon_2'x_2'$. Therefore, for the case $x_2<x_2'$, it is necessarily true that:
\begin{eqnarray*} 
q_1'&=& q_1+\epsilon_2
\\ \epsilon_2'&=& -\epsilon_2
\\ x_2'&=&x_1-x_2.
\end{eqnarray*}
Hence, for the beginning of the other algorithm, we have the equation:
$$x_0=x_1(q_1+\epsilon_2)+\epsilon_2(x_2-x_1).$$
Thus, applying the inductive hypothesis and Lemma \ref{lemma}, we have:
\begin{eqnarray*}
S(x_0,x_1)&=&(q_1+\epsilon_2)+1+S(x_1,x_1-x_2) 
\\ &\geq& (q_1+\epsilon_2)+1+S_m(x_1,x_1-x_2)
\\ &=& q_1+\epsilon_2+2+S_m(x_1,x_2)
\\ &\geq& q_1+1+S_m(x_1,x_2) 
\\ &=& S_m(x_0,x_1).
\end{eqnarray*}
\end{description}
Therefore, we have proven that the method of least absolute remainders gives the least number of steps among any general Euclidean algorithms.
\end{proof}
From this, we can deduce two of the following corollaries.

\begin{mycor} \label{regularalgmin}
The regular Euclidean algorithm gives the least number of steps among any general Euclidean algorithms.
\end{mycor}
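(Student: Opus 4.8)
The plan is to obtain this as an immediate consequence of the two results already established, namely Theorem \ref{samenum} and Theorem \ref{coolthm}. The key observation is that the regular Euclidean algorithm is itself a special instance of the general Euclidean algorithm in (\ref{euclidany}), obtained by setting every $\epsilon_i=+1$. Hence whatever lower bound applies to all general Euclidean algorithms applies in particular to the regular one, and it suffices to transport the minimality of the method of least absolute remainders across to the regular method via their equal step counts.

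Concretely, I would write $S_r(x_0,x_1)$ for the number of steps of the regular Euclidean algorithm, counted with the convention of Section \ref{section4}. First, Theorem \ref{samenum} gives that the regular method and the method of least absolute remainders require the same number of steps, that is, $S_r(x_0,x_1)=S_m(x_0,x_1)$. Second, Theorem \ref{coolthm} gives that the method of least absolute remainders is minimal among all general Euclidean algorithms, that is, $S_m(x_0,x_1)\leq S(x_0,x_1)$ for every such algorithm. Chaining these two facts yields
$$S_r(x_0,x_1)=S_m(x_0,x_1)\leq S(x_0,x_1),$$
which is exactly the claim.

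Since the substantive work has already been carried out in proving the two theorems, there is no genuine obstacle here; the only point requiring a moment's care is to confirm that the regular method and the method of least absolute remainders are being counted under the same step-counting convention (subtractions plus swaps) as in Section \ref{section4}, so that Theorem \ref{samenum} indeed equates their \emph{step} counts and not merely their division counts. Once that bookkeeping is noted, the corollary follows in one line.
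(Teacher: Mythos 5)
Your proposal is correct and is exactly the paper's argument: the paper proves this corollary by declaring it immediate from Theorem \ref{samenum} and Theorem \ref{coolthm}, which is precisely your chain $S_r(x_0,x_1)=S_m(x_0,x_1)\leq S(x_0,x_1)$. Your added remark about verifying that both theorems use the same step-counting convention (subtractions plus swaps) is a sensible point of care that the paper leaves implicit.
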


\begin{mycor} \label{regularalgmin2}
The method of least absolute remainders gives the least number of steps with the least number of equations among any general Euclidean algorithm.
\end{mycor}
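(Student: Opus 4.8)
The plan is to read off both optimality properties from results already in hand, treating the two quantities---number of steps and number of equations---separately. The number-of-steps half is immediate: Theorem \ref{coolthm} gives $S_m(x_0,x_1) \le S(x_0,x_1)$ for every general Euclidean algorithm, so the method of least absolute remainders already realises the minimal number of steps. What remains is to verify that it simultaneously uses the fewest equations, i.e.\ that the count $m$ of divisions in (\ref{euclidany}) for the method of least absolute remainders is no larger than the number of equations in any competing method.

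For the equation count I would invoke Kronecker's theorem \cite[p.47-51]{uspensky}, which asserts exactly that the number of divisions in the method of least absolute remainders does not exceed that of any other Euclidean algorithm. Combined with the previous paragraph, this shows the single method of least absolute remainders attains both minima at once. To make the corollary's phrasing sharp, I would then contrast it with the regular method: by Theorem \ref{samenum} the regular algorithm matches the minimal step count, yet by the Goodman--Zaring identity $n-m=\tfrac12\sum_{i=2}^m(|\epsilon_i|-\epsilon_i)$ its number of equations $n$ exceeds $m$ by the number of negative remainders. Hence, among all methods achieving the least number of steps, the method of least absolute remainders is the one doing so with the fewest equations.

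Alternatively---and this is the only place requiring genuine work---one could make the proof self-contained by re-running the induction of Theorem \ref{coolthm} while carrying a second bookkeeping quantity, the number of equations. In each of the two cases ($x_2=x_2'$ and $x_2<x_2'$) one checks that the recursion reduces the equation count of the method of least absolute remainders to that of the sub-pair without ever exceeding the competitor's count, using Lemma \ref{easylemma} to pin down the ambiguous case where $2x_2=x_1$ and the algorithm terminates one step later. The main obstacle I anticipate is precisely this simultaneous tracking: the manipulation in Lemma \ref{lemma} that trades one extra step when passing from $(x_1,x_2)$ to $(x_1,x_1-x_2)$ must be shown not to inflate the equation count, so that the two minima remain compatible rather than competing. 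Once that compatibility is confirmed, the corollary follows.
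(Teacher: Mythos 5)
Your proposal is correct and matches the paper's own argument: the paper likewise deduces the corollary immediately by combining Theorem \ref{coolthm} (minimal steps) with Kronecker's theorem (minimal number of divisions), alongside Theorem \ref{samenum}. The alternative self-contained induction you sketch is unnecessary extra work, but the first two paragraphs alone constitute essentially the paper's proof.
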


\begin{proof}
These are immediate from Theorem \ref{samenum}, Theorem \ref{coolthm} and the theorem by Kronecker.
\end{proof}

\section{Application in Rational Tangles}\label{section6}
\subsection{Rational Tangles and Conway's Theorem} 
We begin by defining the untangle as two strings lying vertical and not intersecting. We assign the number $0$ to this setting. Label each end of the strings as NE, NW, SE and SW respectively. We have two operations that we can do on the tangle. Firstly, we can switch the position of the SE and NE ends. This move is called twist. If we twist the strings such that the gradient of the overstrand is positive, we call it the positive twist. Otherwise, we call it the negative twist. When we carry out the twisting operation, we add or subtract $1$ from the tangle number, depending on the direction of twist.

\hspace{-3 mm}
\begin{minipage}{0.5\textwidth}
\begin{figure}[H]
\centering
\vspace{5 mm}
\scalebox{0.5}{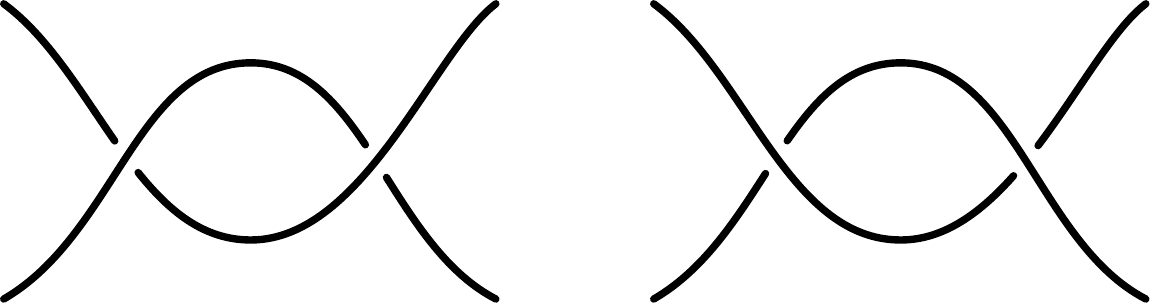}
\vspace{5 mm}
\caption{Tangles with number $2$ and $-2$}
\end{figure}
\end{minipage}
\hspace{-2 mm}
\begin{minipage}{0.5\textwidth}
\begin{figure}[H]
\centering
\scalebox{0.5}{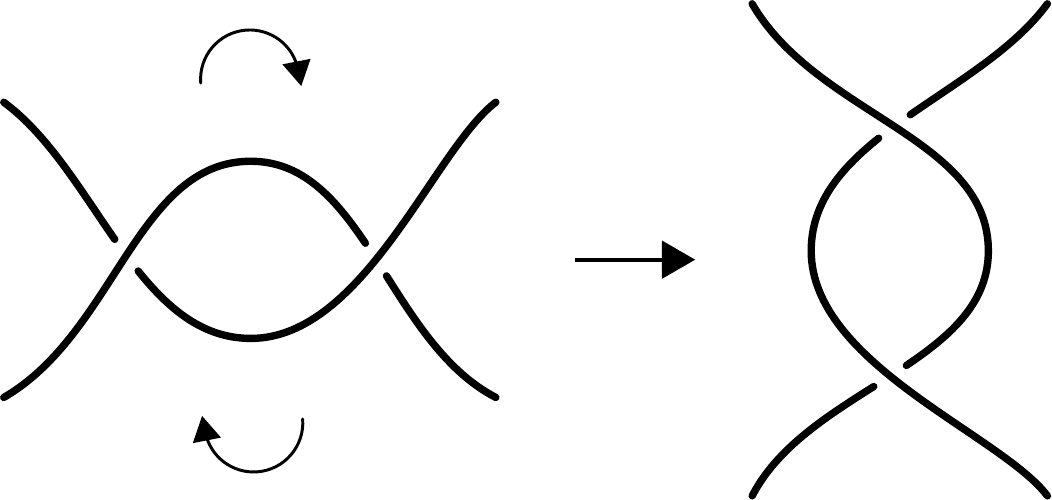}
\caption{Rotation}
\label{rotation}
\end{figure}
\end{minipage}

\vspace{5 mm}
Secondly, we have the rotation operation. As the name suggests, we rotate the whole tangle $90^\circ$ clockwise for this operation as in Figure \ref{rotation} above. When we carry out this operation, the tangle number is transformed to its negative reciprocal. Such tangles that are constructed using only these two operations are called rational tangles.

\begin{myexa}
We begin with the untangle. We twist it three times in the negative direction, rotate it and do one twist in the negative direction. Finally, we rotate it and do two more twists in the positive direction. The resulting tangle is given in Figure \ref{-312} below. By following the rules stated above, the tangle number of this tangle can be calculated to be $\frac{7}{2}$.

\begin{figure}[H]
\centering
\scalebox{0.5}{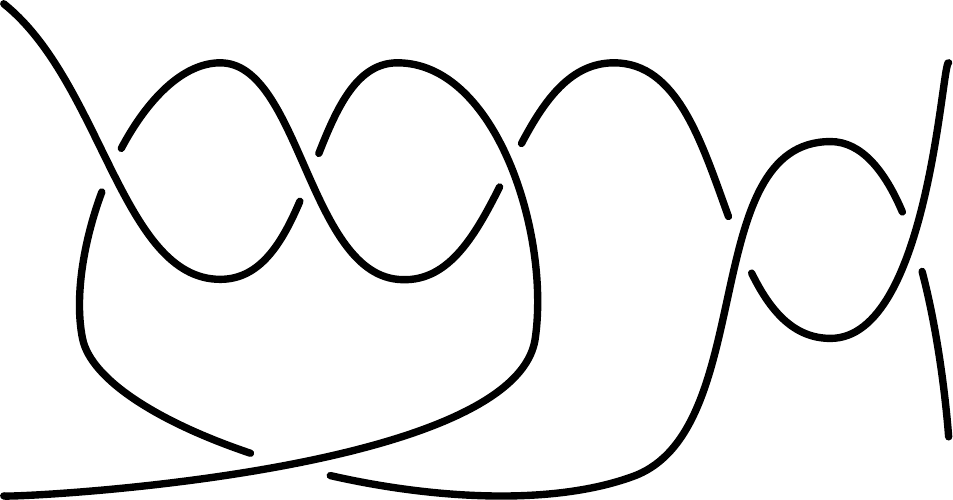}
\caption{A tangle with number $\frac{7}{2}$}
\label{-312}
\end{figure}

\end{myexa}

What is the significance of tangle numbers? A remarkable theorem by John Conway is given below:

\begin{mythm} [Conway's Theorem]
Two rational tangles with equal tangle numbers are equivalent (i.e. can be transformed from one to the other via a sequence of Reidemeister moves with the four ends fixed).
\end{mythm}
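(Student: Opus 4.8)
The plan is to prove that the tangle number is a \emph{complete invariant} of rational tangles: equivalent tangles always carry the same number (soundness), and conversely equal numbers force equivalence (completeness). Given the construction rules of this section, to a sequence of twists and rotations producing a tangle $T$ one attaches a value $F(T)\in\mathbb{Q}\cup\{\infty\}$ built up by the recipe ``a twist changes the value by $\pm1$ and a rotation sends the value $t$ to $-1/t$''. Reading the construction from the innermost twist outward, $F(T)$ is exactly a continued fraction
\[
F(T) = a_k + \cfrac{1}{a_{k-1} + \cfrac{1}{\,\ddots\, + \cfrac{1}{a_1}}},
\]
where the integers $a_j$ record the signed numbers of twists performed between successive rotations. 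The first task is the elementary one: to check that $F(T)$ is a well-defined reduced fraction and that every element of $\mathbb{Q}\cup\{\infty\}$ arises this way. This follows because each such continued fraction collapses to a unique rational number, and conversely every rational number has a (finite) continued fraction expansion produced by the Euclidean algorithm.

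The heart of the matter, and the step I expect to be the main obstacle, is \textbf{invariance}: that $F(T)$ is unchanged under any isotopy of the tangle fixing the four endpoints, i.e. under Reidemeister moves performed inside the tangle box. This is a genuinely topological statement and does \emph{not} follow from the arithmetic of continued fractions, since a priori two different twist–rotation words could build isotopic tangles to which one has naively attached different fractions. I would establish invariance by one of two standard routes. The combinatorial route repackages $F(T)$ not as a by-product of the construction but as an intrinsic quantity extracted from the tangle itself, for instance the ratio $\langle N(T)\rangle/\langle D(T)\rangle$ of the Kauffman brackets of the numerator and denominator closures of $T$, and then verifies directly that this ratio is preserved by each Reidemeister move while transforming by $t\mapsto t\pm1$ under a twist and by $t\mapsto -1/t$ under a rotation. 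The topological route passes to the double cover of the defining ball branched over the two strands of $T$; for a rational tangle this cover is a solid torus carrying a distinguished meridian slope, isotopy classes of rational tangles correspond to slopes, and $F(T)$ records precisely that slope, so invariance reduces to the isotopy invariance of the branched cover together with its slope.

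Granting invariance, soundness is immediate. For \textbf{completeness} I would use exactly the arithmetic developed earlier in this paper. Suppose $F(S)=F(T)=p/q$. Reducing $p/q$ by the Euclidean algorithm yields a canonical continued fraction expansion, which by the correspondence above prescribes a canonical twist–rotation word and hence a canonical rational tangle $T_{p/q}$. It then suffices to show that any rational tangle with fraction $p/q$ is isotopic to $T_{p/q}$. Each reduction step of the Euclidean algorithm translates into an explicit tangle isotopy: cancelling a quotient $q_j$ corresponds to untwisting $q_j$ crossings and a swap corresponds to a rotation, while the move that rewrites one continued fraction into an equivalent one for the same rational number, namely the passage between the regular method and the method of least absolute remainders analysed in Theorem \ref{samenum}, is realised topologically by a flype. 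Chaining these isotopies drives both $S$ and $T$ to the common form $T_{p/q}$, whence $S$ and $T$ are equivalent.

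Finally, the optimisation results of Section \ref{section5} enter as a dividend rather than as part of the logical core. Once the fraction is known to be a complete invariant, Corollary \ref{regularalgmin} and Corollary \ref{regularalgmin2} single out, among all continued fraction expansions realising the untangling of $T_{p/q}$, those that use the fewest twisting-and-rotation steps, so the method of least absolute remainders furnishes the most efficient untangling. In summary, the only place where irreducible topology is required is the invariance step of the second paragraph; everything downstream of it is the number theory already established in this paper.
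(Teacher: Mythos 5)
First, a point of comparison: the paper does not prove Conway's Theorem at all; it explicitly defers to Goldman and Kauffman \cite{goldmankauffman} and only uses the statement as a black box. So your proposal cannot be measured against an internal proof; what it can be measured against is whether it would stand on its own, and there it has genuine gaps.

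The most serious gap is in the direction that the statement actually asserts. The theorem as quoted is one implication only: equal tangle numbers imply equivalence. Your ``invariance'' paragraph (isotopic tangles have equal fractions) is the converse implication --- soundness --- which is not what is being claimed, yet it absorbs most of your effort and is in any case only named rather than proved: ``one of two standard routes'' via the Kauffman bracket or the branched double cover is a citation-shaped placeholder, exactly parallel to what the paper itself does by citing \cite{goldmankauffman}. The implication that \emph{is} needed --- your ``completeness'' paragraph --- contains a wrong step: you claim that ``each reduction step of the Euclidean algorithm translates into an explicit tangle isotopy: cancelling a quotient $q_j$ corresponds to untwisting $q_j$ crossings and a swap corresponds to a rotation.'' Untwisting and rotation are not isotopies; they are the construction operations, and they change both the tangle and its fraction. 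Applying the untangling word to a tangle of fraction $p/q$ produces a tangle of fraction $0$, but concluding that a tangle of fraction $0$ is isotopic to the untangle is an instance of the very theorem being proved, so this route is circular as stated. The non-circular argument in the literature instead shows that every rational tangle can be isotoped into a canonical alternating form using genuinely topological moves --- transfer of twists from bottom to top, flypes, and the invariance of rational tangles under $180^\circ$ rotation --- and then invokes the uniqueness of the all-positive continued fraction expansion of $p/q$ to conclude that two canonical forms with the same fraction coincide, so that any two tangles of fraction $p/q$ are isotopic to a common model. You gesture at the flype in one clause, but these topological lemmas are where the entire content of the theorem lives, and they are absent from the proposal.
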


\begin{myrem} If you are not familiar with knot theory, Reidemeister moves is just a set of moves which are allowed locally in a knot. They are useful in defining knot invariants, which is a heavily studied subject in knot theory. It is one of the basic topics defined early on in any knot theory course or textbooks.
\end{myrem}

For the proof of Conway's theorem, refer to the paper by  J.R. Goldman and L.H. Kauffman \cite{goldmankauffman}. By the theorem, if we can somehow get the number down to $0$, the resulting tangle will be equivalent to the untangle, thus outlining a systematic method to untangle it.

Amazingly, we can find a way to untangle a rational tangle using the Euclidean algorithm. If we are given a tangle with number $\frac{x_0}{x_1} \geq1$, we can find a way to untangle it by finding the Euclidean algorithm for the pair $x_0$ and $x_1$ and by manipulating the equations slightly (dividing through the $i$-th equation with $x_i$ and multiplying through some equations with $-1$ accordingly). However, by Corollary \ref{regularalgmin} we note that the regular Euclidean algorithm gives the least number of steps among any Euclidean algorithms, so by fixing each $\epsilon_i$ in (\ref{euclidany}) to be $+1$ and multiplying alternate equations with $-1$, we have:
\begin{eqnarray} \label{untangleeuclid}
\frac{x_0}{x_1}-p_1&=& \frac{x_2}{x_1} \nonumber
\\ -\frac{x_1}{x_2}+p_2&=& -\frac{x_3}{x_2} \nonumber
\\  & \vdots
\\ (-1)^{m-2}\frac{x_{m-2}}{x_{m-1}}-(-1)^{m-2}p_{m-1}&=&(-1)^{m-2}\frac{x_{m}}{x_{m-1}} \nonumber
\\ (-1)^{m-1}\frac{x_{m-1}}{x_m}- (-1)^{m-1}p_m &=&0. \nonumber
\end{eqnarray}

Why did we multiply every alternate equation with $-1$? Note that when we carry out a rotation, we take the negative reciprocal of the tangle number. The gist for the algorithm in (\ref{untangleeuclid}) is that we begin with a tangle with number $\frac{x_0}{x_1}$ and add $-p_1$ twist to it, to get a tangle with number $\frac{x_2}{x_1}$. Then, the next step is to rotate this tangle we get a tangle with number $-\frac{x_1}{x_2}$. But in the regular Euclidean algorithm, in the following step, we have the equation for $\frac{x_1}{x_2}$, not the negative. Thus, we need to multiply this equation with $-1$ to ensure continuity in untangling algorithm. Going down the list of equations, we see that if we have the regular Euclidean algorithm, we need to multiply every alternate equation with $-1$, so that we have a coherent untangling algorithm related to the list of equations.

From this, we can read off the algorithm to untangle the $\frac{x_0}{x_1}$ tangle: we first twist it $p_1$ times in the negative direction, rotate it, twist another $p_2$ times, rotate, and so on, going down the algorithm and eventually getting to $0$, untangling it. Therefore, there is a total of $(m-1)+\sum_{i=1}^mp_i$ steps to untangle the said tangle. 

A similar method can be done for tangles with numbers $\frac{x_0}{x_1}\leq -1$. For the tangles with numbers $-1<\frac{x_0}{x_1}<1$, we start with a rotation to get the magnitude of the number to be greater than $1$ and then continue accordingly. 

Thus, any rational tangle can be untangled in a finite number of moves by looking at the corresponding Euclidean algorithm, proving the existence of an untangling algorithm.

Furthermore, note that by doing the algebraic manipulations on the Euclidean algorithm, they do not change the number of steps required in the algorithm as neither the value of the $p_i$'s nor the number of equations are affected. Thus, we can determine the minimum number of steps required to untangle a rational tangle with number $\frac{x_0}{x_1}$ by finding the number of steps to carry out the regular Euclidean algorithm for the pair $|x_0|$ and $|x_1|$. 

\subsection{Untangling Algorithm with Minimal Permutations of Ends}

Using any Euclidean algorithm, we can determine the steps to untangle any given rational tangle. We look at a special method discussed in the bulk of the previous sections: the method of least absolute remainders. 

By a similar method as the regular Euclidean algorithm above, we can also show that the method of least absolute remainders can also be used to determine the untangling algorithm of a rational tangle. However, we must be careful because we only need to multiply some equations with $(-1)$, rather than every alternate equations. This is because in the regular Euclidean algorithm, all of our remainders are positive but in the method of least absolute remainders, we may have negative remainders in some of the equations.

Recall that we have shown that the method of least absolute remainders also gives the least number of steps in the algorithm. Furthermore, this method gives the least number of equations, by Corollary \ref{regularalgmin2}. 

Translating into the language of tangles, the method of least absolute remainders gives the least number of steps for untangling rational tangles and this method has the least number of rotations involved. Twisting permutes two of the ends of the tangle, but rotation permutes all four of the ends. So ideally, if we want to untangle a rational tangle with the least number of permutations of the ends, the method of least absolute remainders is the way to go as we have the same number of steps as the regular Euclidean algorithm, but with less number of rotations involved.

\subsection{Restricted Untangling Algorithm}
What about the method of negative remainders? If we consider the method of negative remainders in (\ref{untangleeuclid}), before we multiply any equations with $-1$, we note that all the terms in the RHS are negative, so when we rotate these tangles, they will give positive fractions. Hence, we do not need to multiply any equations with $-1$. By doing so, we note that all the twists in the algorithm are negative twists. Therefore, if we require an algorithm to untangle a rational tangle such that the twists are restricted only to one direction, we utilise the method of negative remainders.

\begin{myexa}
Consider the tangle with number $\frac{8}{5}$ as in Figure \ref{85} below. We want to find out the ways to untangle the tangle using the three different types of Euclidean algorithms.

\begin{figure}[H]
\centering
\scalebox{0.55}{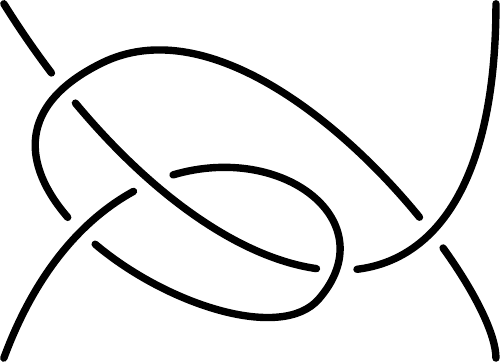}
\caption{A tangle with number $\frac{8}{5}$}
\label{85}
\end{figure}

We begin by writing down the three Euclidean algorithm for the numbers $8$ and $5$ below. The first column is for the regular Euclidean algorithm, the second is the method of least absolute remainders and the third one is the method of negative remainders.

\begin{minipage}{0.3\textwidth}
\begin{eqnarray*} 
8&=& 5(1)+3
\\ 5&=& 3(1)+2
\\ 3&=& 2(1)+1
\\ 2&=& 1(2)+0
\end{eqnarray*}
\end{minipage} 
\begin{minipage}{0.3\textwidth}
\begin{eqnarray*} 
8&=& 5(2)-2
\\ 5&=& 2(2)+1
\\ 2&=& 1(2)+0
\\
\end{eqnarray*}
\end{minipage} 
\begin{minipage}{0.3\textwidth}
\begin{eqnarray*} 
8&=& 5(2)-2
\\ 5&=& 2(3)-1
\\ 2&=& 2(1)+0
\\
\end{eqnarray*}
\end{minipage}

\vspace{5 mm}
By the manipulations discussed earlier, we would get the list of equations below, respective to the Euclidean algorithm used above.

\begin{minipage}{0.3\textwidth}
\begin{eqnarray*} 
\frac{8}{5}-1&=&\frac{3}{5}
\\ -\frac{5}{3}+1&=&-\frac{2}{3}
\\ \frac{3}{2}-1&=&\frac{1}{2}
\\ -2+2&=&0
\end{eqnarray*}
\end{minipage} 
\begin{minipage}{0.3\textwidth}
\begin{eqnarray*} 
\frac{8}{5}-2&=& -\frac{2}{5}
\\ \frac{5}{2}-2&=& \frac{1}{2}
\\- 2+2&=&0
\\
\end{eqnarray*}
\end{minipage} 
\begin{minipage}{0.3\textwidth}
\begin{eqnarray*} 
\frac{8}{5}-2&=& -\frac{2}{5}
\\ \frac{5}{2}-3&=& -\frac{1}{2}
\\ 2-2&=& 0
\\
\end{eqnarray*}
\end{minipage}

\vspace{5 mm}
From these, we can read off the algorithm for untangling the rational tangle using the three different Euclidean algorithm. Denoting $T$ as positive twist, $-T$ as negative twist and $R$ as rotation, we would get three different ways of untangling the $\frac{8}{5}$ tangle:

\begin{quote}
Regular Euclidean algorithm: $[-T,R,T,R,-T,R,T,T].$
\\Method of least absolute remainders: $[-T,-T, R,-T,-T,R,T,T].$
\\Method of negative remainders: $[-T,-T,R,-T,-T,-T,R,-T,-T].$
\end{quote}

Note that the regular Euclidean algorithm and the method of least absolute remainders gives $8$ number of steps, and by the theorems proved earlier, this is the minimum possible number of steps to untangle the given tangle. Furthermore, in the regular Euclidean algorithm, we have $3$ rotations whereas in the method of least absolute remainders, there are only $2$ rotations involved. Thus, in terms of number of permutation of the ends of the tangles, the method of least absolute remainders is more efficient as there are less rotations required in the algorithm.

For the method of negative remainders, the only twist moves involved are the negative twists. Therefore, if our twists are restricted to only one direction, we can use the method of negative remainders to find the untangling algorithm. 

\end{myexa}

\section*{Acknowledgement}
I would like to thank Professor David Gauld of University of Auckland for accepting me as a summer research student under the Midyear Research Scholarship. His guidance and direction was very crucial in the research and writing of this paper. It has been an enjoyable experience working under his supervision.

\end{document}